\newtheorem{proposition}{Proposition}[section]
\newtheorem{lemma}[proposition]{Lemma}
\newtheorem{corollary}[proposition]{Corollary}
\newtheorem{theorem}[proposition]{Theorem}}
\newcommand{\eps}{\varepsilon}
\newcommand\Q{{\mathbb Q}}
\newcommand\Z{{\mathbb Z}}
\newcommand{\CC}{{\mathcal{C}}}
\newcommand{\OO}{{\mathcal{O}}}
\newcommand\qed{\hfill$\square$}
\renewcommand{\Im}{{\mathrm{Im}}}
\newcommand{\Gal}{{\mathrm{Gal}}}
\newcommand{\ord}\nu
\newenvironment{proof}{\paragraph{Proof}}{}
\title{Fields generated by sums and products of singular moduli}
\author{Bernadette FAYE, Antonin RIFFAUT}
\renewcommand*\l@section[2]{%
  \ifnum \c@tocdepth >\z@
    \addpenalty\@secpenalty
    \addvspace{0.2em \@plus\p@}%
    \setlength\@tempdima{1.5em}%
    \begingroup
      \parindent \z@ \rightskip \@pnumwidth
      \parfillskip -\@pnumwidth
      \leavevmode \bfseries
      \advance\leftskip\@tempdima
      \hskip -\leftskip
      #1\nobreak\hfil \nobreak\hb@xt@\@pnumwidth{\hss #2}\par
    \endgroup
  \fi}
\begin{document}

\hfuzz 5pt

\maketitle

\begin{abstract}
We show that the field  $\Q(x,y)$, generated by two singular moduli~$x$ and~$y$, is generated by their sum ${x+y}$,   unless~$x$ and~$y$ are conjugate over~$\Q$, in which case ${x+y}$ generates a subfield of degree at most~$2$. We obtain a similar result for the product of two singular moduli. 
\end{abstract}

{\footnotesize 

\tableofcontents

}

\section{Introduction}
A \textsl{singular modulus} is the $j$-invariant of an elliptic curve with complex multiplication. Given a singular modulus~$x$ we denote by $\Delta_x$ the discriminant of the associated imaginary quadratic order. 
We denote by $h(\Delta)$ the class number of the imaginary quadratic order of discriminant~$\Delta$. Recall that two singular moduli~$x$ and~$y$ are conjugate over~$\Q$ if and only if ${\Delta_x=\Delta_y}$, and that all singular moduli of a given discriminant~$\Delta$ form a full Galois orbit over~$\Q$. In particular, ${[\Q(x):\Q]=h(\Delta_x)}$. For all details, see, for instance, \cite[\S 7 and \S 11]{Co13}

Starting from the ground-breaking article of André~\cite{An98} equations involving singular moduli were studied by many authors, see~\cite{ABP15,BLP16,Ri17} for a historical account and further references. In particular, Kühne~\cite{Ku13} proved that equation ${x+y=1}$ has no solutions in singular moduli~$x$ and~$y$, and Bilu et al~\cite{BMZ13} proved the same for the equation ${xy=1}$.
These results where generalized in~\cite{ABP15} and~\cite{BLP16}. 

\begin{theorem} {\rm \cite{ABP15,BLP16}}
\label{thspinq}
Let~$x$ and~$y$ be singular moduli such that ${x+y\in \Q}$ or ${xy\in \Q^\times}$. Then either ${h(\Delta_x)=h(\Delta_y)=1}$ or ${\Delta_x=\Delta_y}$ and ${h(\Delta_x)=h(\Delta_y)= 2}$. 
\end{theorem}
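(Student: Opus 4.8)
The plan is to combine size estimates for singular moduli with a Galois‑theoretic transport of the relation, cutting the problem down to a finite computation. I spell out the additive case $x+y\in\Q$ and indicate the (parallel but more delicate) multiplicative case at the end. First, since singular moduli are algebraic integers, $r:=x+y$ is an algebraic integer lying in $\Q$, hence $r\in\Z$; moreover $\Q(x)=\Q(r-x)=\Q(y)$, so $h(\Delta_x)=h(\Delta_y)=:h$. If $h=1$ we are in the first alternative, so assume henceforth $h\ge2$; the goal becomes to prove $\Delta_x=\Delta_y$ and $h=2$. Applying any $\sigma\in\Gal(\overline\Q/\Q)$ gives $\sigma(x)+\sigma(y)=r$, and since the conjugates of $x$ (resp.\ $y$) are exactly the singular moduli of $\Delta_x$ (resp.\ $\Delta_y$), the map $z\mapsto r-z$ is a bijection from the set of singular moduli of $\Delta_x$ onto that of $\Delta_y$.

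Next I would invoke the standard estimates coming from $j(\tau)=q^{-1}+744+O(e^{-2\pi\Im\tau})$, $q=e^{2\pi i\tau}$: a singular modulus attached to a reduced form $(a,b,c)$ of discriminant $\Delta$ satisfies $\bigl||z|-e^{\pi\sqrt{|\Delta|}/a}\bigr|=O(1)$, the \emph{dominant} one (attached to the unique reduced form with $a=1$) equals $\eps\,e^{\pi\sqrt{|\Delta|}}+744+O(e^{-\pi\sqrt{|\Delta|}})$ with $\eps\in\{\pm1\}$ depending only on $\Delta\bmod4$, and every other one has absolute value at most $e^{\pi\sqrt{|\Delta|}/2}+O(1)$; so each discriminant has exactly one singular modulus that is exponentially larger than all the rest. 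Let $x_0$ and $y_0$ be the dominant moduli of $\Delta_x$ and $\Delta_y$. As $r-x_0$ is a singular modulus of $\Delta_y$ while $x_0$ is huge and $r$ is a fixed integer, either $r-x_0=y_0$ or $r-x_0$ is a non‑dominant modulus of $\Delta_y$; chasing these cases together with the symmetric ones for $r-y_0$ (and using that $h\ge2$ provides at least one non‑dominant modulus of $\Delta_x$) forces $e^{\pi\sqrt{|\Delta_x|}}$ and $e^{\pi\sqrt{|\Delta_y|}}$ to be comparable. I expect the main obstacle to be upgrading ``comparable'' to ``equal'': a crude bound only places $\sqrt{|\Delta_x|}$ and $\sqrt{|\Delta_y|}$ within a bounded distance of each other, which does not pin down the integers $|\Delta_x|$ and $|\Delta_y|$, so one must exploit the much finer $O(e^{-\pi\sqrt{|\Delta|}})$ error term in $j=q^{-1}+744+\cdots$ (together with $r\in\Z$) to make $\bigl|\sqrt{|\Delta_x|}-\sqrt{|\Delta_y|}\bigr|$ exponentially small, which then gives $|\Delta_x|=|\Delta_y|$ as soon as $\max(|\Delta_x|,|\Delta_y|)$ exceeds an explicit bound. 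The finitely many remaining pairs of discriminants are checked by direct computation.

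Once $\Delta_x=\Delta_y=:\Delta$, the map $z\mapsto r-z$ is a permutation of the $h$ singular moduli of $\Delta$; it squares to the identity and has no fixed point, since a fixed point $z$ would satisfy $z=r/2\in\Q$, impossible as $[\Q(z):\Q]=h\ge2$. Hence $h$ is even. If $h\ge4$, then $x_0$ is paired with a non‑dominant modulus, forcing $|r|\ge e^{\pi\sqrt{|\Delta|}}-e^{\pi\sqrt{|\Delta|}/2}-O(1)$, while at least one of the remaining pairs consists of two non‑dominant moduli, forcing $|r|\le 2e^{\pi\sqrt{|\Delta|}/2}+O(1)$; these are incompatible once $|\Delta|$ is beyond a small bound, which is automatic since $h(\Delta)\ge4$ implies $|\Delta|\ge39$. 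Therefore $h=2$, and the additive case is complete.

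For $xy\in\Q^\times$ one first notes that $\Delta_x,\Delta_y\neq-3$ (otherwise $x=0$ and $xy=0$), whence again $r\in\Z\setminus\{0\}$, $\Q(x)=\Q(y)$, $h(\Delta_x)=h(\Delta_y)=h$, and $z\mapsto r/z$ is a bijection between the two sets of conjugates. The scheme is identical, except that $r/x_0$ is now \emph{small}, hence necessarily the smallest conjugate of $y$. On this side the real obstacle is that the smallest singular modulus of a discriminant $\Delta$ is \emph{not} bounded away from $0$ by an absolute constant (it can be polynomially small in $|\Delta|$, because $j$ has a triple zero), so the comparison of discriminants and the exclusion of $h\ge4$ require genuine lower bounds for singular moduli --- obtained, for instance, from the fact that the product of all conjugates of $x$ is a nonzero rational integer when $\Delta_x\neq-3$ --- after which the same reduction to a finite computation goes through.
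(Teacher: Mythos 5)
You should note first that the paper does not prove Theorem~\ref{thspinq} at all: it is imported wholesale from \cite{ABP15} (the sum statement, as a special case of their Theorem~1.2 on CM-points on straight lines) and \cite{BLP16} (the product statement), so there is no in-paper proof to compare against. Measured against the proofs in those references, your outline follows essentially the same strategy: deduce $\Q(x)=\Q(y)$ and transport the relation by Galois conjugation into a bijection $z\mapsto r-z$ (resp.\ $z\mapsto r/z$) between the two conjugacy classes, then play the unique dominant singular modulus of each discriminant against the non-dominant ones to force $\Delta_x=\Delta_y$ and $h\le 2$ outside an explicit range, and finish by a finite computation. Two remarks. First, the obstacle you single out in the additive case --- upgrading ``comparable'' to ``equal'' --- is less serious than you suggest: the error terms in the size estimates are $O(e^{\pi\sqrt{|\Delta|}/2})$, i.e.\ multiplicatively $1+O(e^{-\pi\sqrt{|\Delta|}/2})$, whereas distinct discriminants already force $e^{\pi\sqrt{|\Delta_x|}}$ and $e^{\pi\sqrt{|\Delta_y|}}$ apart by a factor $1+c|\Delta|^{-1/2}$, so the crude estimates suffice and the refined $q$-expansion is not needed. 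Second, in the product case your claim that $r/x_0$ is ``small'' is not literally true --- it is only the smallest conjugate of $y$, because $z\mapsto r/z$ reverses the ordering by absolute value --- and the genuinely hard ingredient, a lower bound of the shape $|x|\gg\min(1,|\Delta|^{-3})$ for nonzero singular moduli (which this paper itself quotes as equation~(12) of \cite{BLP16} in Section~\ref{sprod}), is only gestured at; that bound, together with the rather large finite verifications, is where most of the work in \cite{ABP15,BLP16} actually lies. So what you have is a correct and well-aimed plan rather than a complete proof.
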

Here the statement about ${x+y}$ is (a special case of) Theorem~1.2 from~\cite{ABP15}, and the statement about $xy$ is Theorem~1.1 from~\cite{BLP16}. 

Note that lists of all imaginary quadratic discriminants~$\Delta$ with ${h(\Delta)\le 2}$ are widely available, so Theorem~\ref{thspinq} is fully explicit. 

In view of Theorem~\ref{thspinq} one may ask the following question: how much does the number field generated by the sum ${x+y}$ or the product $xy$ of two singular moduli differ from the field $\Q(x,y)$? The objective of this note is to show that the fields ${\Q(x+y)}$ and $\Q(xy)$ (provided ${xy\ne 0}$) are subfields of $\Q(x,y)$ of degree at most~$2$, and in ``most cases'' each of ${x+y}$ and $xy$ generates $\Q(x,y)$. Here are our principal results.  

\begin{theorem}
\label{thsum}
Let~$x$ and~$y$ be  singular moduli. Then ${\Q(x+y)=\Q(x,y)}$ if ${\Delta_x\ne \Delta_y}$, and  ${[\Q(x,y):\Q(x+y)]\le 2}$ if ${\Delta_x=\Delta_y}$. 
\end{theorem}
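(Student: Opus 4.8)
The plan is to study the Galois action on the pair $(x,y)$ of singular moduli and see how it descends to the sum $x+y$. Write $\Delta_x, \Delta_y$ for the two discriminants, and let $K = \Q(x,y)$. The key structural fact is that $\Q(x) = \Q(x, \sqrt{\Delta_x})$ up to the subtlety that $x$ lies in the ring class field of the order of discriminant $\Delta_x$, which is abelian over the quadratic field $\Q(\sqrt{\Delta_x})$ but generally non-abelian over $\Q$; the Galois closure of $\Q(x)$ over $\Q$ has degree $2h(\Delta_x)$ over $\Q$ when $h(\Delta_x)>1$, with the extra quadratic piece coming from complex conjugation, which sends $x$ to its complex conjugate $\bar x$ (another singular modulus of the same discriminant). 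First I would recall these facts and set up the Galois-theoretic framework: the conjugates of $x$ over $\Q$ are exactly the $h(\Delta_x)$ singular moduli of discriminant $\Delta_x$, and similarly for $y$.

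Next I would treat the case $\Delta_x \neq \Delta_y$. Here I want to show $\Q(x+y) = \Q(x,y)$. Suppose not, so there is a nontrivial $\sigma \in \Gal(\overline{\Q}/\Q(x+y))$ that moves the pair $(x,y)$; then $\sigma(x) + \sigma(y) = x+y$ with $\sigma(x)$ a singular modulus of discriminant $\Delta_x$ and $\sigma(y)$ one of discriminant $\Delta_y$, and $(\sigma(x),\sigma(y)) \neq (x,y)$. Rearranging gives $\sigma(x) - x = y - \sigma(y)$, i.e.\ a nonzero difference of two singular moduli of discriminant $\Delta_x$ equals a difference of two singular moduli of discriminant $\Delta_y$. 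The plan is to play this off against effective bounds on singular moduli: by the theory of the $j$-function, singular moduli of discriminant $\Delta$ come in one "dominant" value (the one with $|q|$ largest, roughly $e^{\pi\sqrt{|\Delta|}}$, which is real and enormous) and the rest are $O(1)$ in size, or more precisely there are good lower bounds for $|x - x'|$ when $\Delta_x \neq \Delta_{x'}$ and good control of the archimedean sizes. Combined with the $\Q$-conjugacy structure and Theorem~\ref{thspinq}, one forces either $h(\Delta_x) = h(\Delta_y) = 1$ (in which case $\Q(x+y) = \Q = \Q(x,y)$ trivially and there is nothing to prove), or a contradiction. I expect one also needs the intermediate fields: $\Q(x+y)$ contains $\Q(x+y)$, and applying an automorphism fixing it shows the corresponding "shift" must be compatible with conjugation by $\Gal(\overline{\Q}/\Q(\sqrt{\Delta_x},\sqrt{\Delta_y}))$, which acts simply transitively enough to kill the ambiguity unless the discriminants coincide.

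For the case $\Delta_x = \Delta_y =: \Delta$, I would argue that $[\Q(x,y):\Q(x+y)] \le 2$ by exhibiting at most one nontrivial automorphism fixing $x+y$. Any $\sigma$ fixing $x+y$ sends $x \mapsto x'$, $y \mapsto y'$ with $x' + y' = x+y$ and $x', y'$ singular moduli of discriminant $\Delta$; the same size/separation estimates force $\{x',y'\} = \{x,y\}$ (one cannot have a genuinely different unordered pair of discriminant-$\Delta$ singular moduli with the same sum, again by the archimedean and non-archimedean bounds, except possibly when $x' = y$ and $y' = x$). Hence $\sigma$ either fixes both $x$ and $y$ or swaps them, so $\Gal(\Q(x,y)/\Q(x+y))$ embeds into $S_2$ and has order at most $2$, giving $[\Q(x,y):\Q(x+y)] \le 2$.

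The main obstacle will be the case $\Delta_x \neq \Delta_y$: ruling out a "rogue" relation $\sigma(x) - x = y - \sigma(y)$ between differences of singular moduli of two different discriminants. This is exactly the kind of Diophantine statement about singular moduli that requires the effective estimates of Bilu–Masser–Zannier type (upper and lower bounds for $|x - x'|$ at the archimedean place and at finite places, dependence on $|\Delta_x|, |\Delta_y|$, the number of conjugates), together with the class-number bound $h(\Delta) \gg |\Delta|^{1/2 - \eps}$, to reduce to a finite list of small discriminants that can be checked by hand or are already covered by Theorem~\ref{thspinq}. Getting these inequalities to close — in particular handling the "dominant" real singular modulus which dwarfs the others — is where the real work lies; everything else is Galois-theoretic bookkeeping.
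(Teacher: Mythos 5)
Your treatment of the case ${\Delta_x=\Delta_y}$ is essentially the paper's argument and is sound: conjugate so that one of $x,y$ is the (unique) dominant singular modulus, note that any $\sigma$ fixing ${x+y}$ but not fixing or swapping the pair sends both coordinates to non-dominant conjugates, and compare ${|x+y|\gg e^{\pi|\Delta|^{1/2}}}$ with ${|x^\sigma+y^\sigma|\ll e^{\pi|\Delta|^{1/2}/2}}$. (Minor quibble: the non-dominant conjugates are not $O(1)$ — the subdominant ones have size about $e^{\pi|\Delta|^{1/2}/2}$ — but this does not affect the argument.) The same kind of size comparison also closes the subcase ${D_x=D_y}$, ${f_x\ne f_y}$, since then $|\Delta_x|^{1/2}$ and $|\Delta_y|^{1/2}$ differ by at least $\sqrt{3}$.

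The genuine gap is the case of distinct fundamental discriminants ${D_x\ne D_y}$. There the archimedean estimates you invoke cannot close the argument: after conjugating you can make at most one of $x,y$ dominant, the moved conjugates $x^\sigma,y^\sigma$ need not be dominant, and since $|\Delta_x|$ and $|\Delta_y|$ are unrelated, the inequality ${|x+\eps y|=|x^\sigma+\eps y^\sigma|}$ yields no contradiction (e.g.\ when $|\Delta_y|\ge|\Delta_x|$ and $y$ is not dominant, both sides are dominated by the $y$-terms). Moreover, the class-number bound ${h(\Delta)\gg|\Delta|^{1/2-\eps}}$ is Siegel's and is \emph{ineffective}, so it cannot ``reduce to a finite list of small discriminants that can be checked by hand''; and Theorem~\ref{thspinq} is about ${x+y\in\Q}$, which is not the situation here. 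The paper's route in this case is algebraic, not analytic: from ${x-x^\sigma=\eps(y^\sigma-y)}$ one gets ${\Q(x-x^\sigma)=\Q(y-y^\sigma)}$; the equal-discriminant case applied with the sign ${\eps=-1}$ (this is exactly why the paper proves the stronger statement about ${x+\eps y}$ for both signs) gives ${\Q(x-x^\sigma)=\Q(x,x^\sigma)}$ and likewise for $y$, hence ${\Q(x,x^\sigma)=\Q(y,y^\sigma)}$; then the dihedral structure of ring class fields — specifically Lenstra's observation that a non-abelian group of dihedral type has a \emph{unique} abelian index-$2$ subgroup, so the field equality forces ${K_x=K_y}$ unless everything is abelian — yields ${\Q(x)=\Q(y)}$, and one concludes from the known finite, explicit list (Table~2 of~\cite{ABP15}) of pairs of distinct fundamental discriminants generating the same field. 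Without this self-strengthening to ${x-y}$ and the dihedral-group lemma, your sketch has no viable mechanism for handling ${D_x\ne D_y}$.
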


\begin{theorem}
\label{thprod}
Let~$x$ and~$y$ be non-zero singular moduli. Then ${\Q(xy)=\Q(x,y)}$ if ${\Delta_x\ne \Delta_y}$, and  ${[\Q(x,y):\Q(xy)]\le 2}$ if ${\Delta_x=\Delta_y}$. 
\end{theorem}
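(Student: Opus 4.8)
The plan is to reduce the statement to a rigidity property for multiplicative coincidences between singular moduli of prescribed discriminants, and then to prove that property along the lines of~\cite{BLP16}. The inclusion ${\Q(xy)\subseteq\Q(x,y)}$ is clear. Fix a number field~$L$, Galois over~$\Q$ and containing both~$x$ and~$y$, and set ${G=\Gal(L/\Q)}$, ${H=\{\sigma\in G:\sigma x=x,\ \sigma y=y\}}$ and ${H'=\{\sigma\in G:\sigma(xy)=xy\}}$, so that ${L^{H}=\Q(x,y)}$, ${L^{H'}=\Q(xy)}$, ${H\subseteq H'}$ and ${[\Q(x,y):\Q(xy)]=[H':H]}$. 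Every ${\sigma\in H'}$ satisfies ${(\sigma x)(\sigma y)=xy}$, where ${\sigma x}$ is a nonzero singular modulus of discriminant~$\Delta_x$ and ${\sigma y}$ one of discriminant~$\Delta_y$. So it suffices to prove the following property $(\star)$: \emph{if $x_1,x_2$ are nonzero singular moduli with ${\Delta_{x_1}=\Delta_{x_2}}$, and $y_1,y_2$ are nonzero singular moduli with ${\Delta_{y_1}=\Delta_{y_2}}$, and ${x_1y_1=x_2y_2}$, then ${\{x_1,y_1\}=\{x_2,y_2\}}$.}

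Indeed, applying $(\star)$ to each ${\sigma\in H'}$ with ${(x_1,y_1)=(\sigma x,\sigma y)}$ and ${(x_2,y_2)=(x,y)}$ gives ${\{\sigma x,\sigma y\}=\{x,y\}}$, so every such~$\sigma$ either fixes both~$x$ and~$y$ (hence lies in~$H$) or maps~$x$ to~$y$ and~$y$ to~$x$. When ${\Delta_x\neq\Delta_y}$ the latter is impossible, since~$x$ and~$y$ are then not conjugate over~$\Q$, so ${H'=H}$ and ${[\Q(x,y):\Q(xy)]=1}$. In general the set of~${\sigma\in H'}$ interchanging~$x$ and~$y$ is either empty or a single coset of~$H$ (for any~$\tau$ in it one checks directly that ${\tau H}$ is exactly this set), so ${[H':H]\le 2}$ and ${[\Q(x,y):\Q(xy)]\le 2}$, which completes the reduction.

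It remains to prove~$(\star)$, which is the heart of the matter; here I would follow the method of~\cite{BLP16}. If each of the (at most four) discriminants involved has class number at most~$2$, the singular moduli in play belong to an explicit finite list and $(\star)$ is settled by a finite, machine-assisted check. Otherwise some discriminant occurring in ${x_1y_1=x_2y_2}$ has class number at least~$3$ and thus large absolute value, and one exploits the dominance phenomenon. Recall that if $(a,b,c)$ is a reduced binary quadratic form of discriminant~$\Delta$ and ${\tau=(-b+\sqrt\Delta)/(2a)}$, the corresponding singular modulus~$j$ satisfies ${|j-e^{-2\pi i\tau}|\le 2079}$ (see~\cite{BMZ13}); hence, among the ${h(\Delta)}$ conjugate singular moduli of discriminant~$\Delta$, the one attached to the principal form has absolute value ${e^{\pi\sqrt{|\Delta|}}+O(1)}$, all others are at most ${e^{\pi\sqrt{|\Delta|}/2}+O(1)}$ in absolute value, and the value~$0$ occurs only for ${\Delta=-3}$. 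Assuming, say, that ${\Delta_{x_1}}$ has the largest absolute value among the four discriminants and choosing ${\sigma\in G}$ so that ${\sigma x_1}$ is attached to the principal form, a comparison of absolute values in ${(\sigma x_1)(\sigma y_1)=(\sigma x_2)(\sigma y_2)}$ — run on both the $x$-side and the $y$-side and iterated, as in~\cite{BLP16} — forces ${\sigma x_2}$ to be attached to the principal form as well, whence ${x_1=x_2}$ and then ${y_1=y_2}$.

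The main obstacle is precisely~$(\star)$; within its proof the delicate point is the archimedean comparison, because the non-principal conjugates are not bounded below in absolute value by an absolute constant, so one must control how small the ``small'' factors can be in terms of the relevant discriminants, and a single Galois automorphism does not suffice to trivialize ${x_1y_1=x_2y_2}$ — which forces the two-sided, iterative argument of~\cite{BLP16}; one must also make explicit the finite range of discriminants handled by the computer check. The companion Theorem~\ref{thsum} is proved by the same scheme, with the multiplicative estimates replaced by the additive ones of~\cite{ABP15}.
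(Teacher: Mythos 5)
Your reduction to the rigidity property $(\star)$ is sound and is essentially the paper's own framing: the paper takes $L$ to be the Galois closure of $\Q(x,y)$, sets ${G=\Gal(L/\Q(xy))}$ and ${H=\Gal(L/\Q(x,y))}$, and derives a contradiction from the existence of $\sigma\in G$ with ${x^\sigma\notin\{x,y\}}$. The gap is in your proof of $(\star)$ itself, which is where all the work lies: the archimedean ``dominance'' comparison you sketch does not close in two essential configurations. First, when ${\Delta_x\ne\Delta_y}$ but the discriminants are of comparable size: normalizing so that $\tau x_1$ is dominant gives a lower bound of order $e^{\pi|\Delta_x|^{1/2}}|\Delta_y|^{-3}$ for $|\tau x_1\cdot\tau y_1|$, while the other side is only bounded above by roughly $e^{\pi|\Delta_x|^{1/2}/2+\pi|\Delta_y|^{1/2}}$, since $\tau y_2$ may perfectly well be dominant for its own discriminant. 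These bounds are compatible whenever, roughly, ${|\Delta_x|\le 4|\Delta_y|}$, and the $y$-side normalization fails symmetrically; as there are infinitely many such pairs, no finite machine check can finish the job. (Also, ``class number at least $3$'' does not imply ``large $|\Delta|$'': $\Delta=-23$ has $h=3$.) The paper gets around this with a genuinely non-archimedean idea: from ${x/x^\sigma=(y^\sigma/y)^{\pm1}}$ it deduces ${\Q(x,x^\sigma)=\Q(y,y^\sigma)}$, then invokes the uniqueness of the abelian index-two subgroup of a non-abelian Galois group of dihedral type (Lenstra's lemma, via Corollary~\ref{ctwoconj}) to force ${K_x=K_y}$; equal fundamental discriminants with distinct conductors are then handled by Proposition~4.3 of~\cite{ABP15} (which pins down ${f_x=2f_y}$), and the residual case ${\Q(x)=\Q(y)}$ reduces to an explicit finite table.

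Second, even in the equal-discriminant case your comparison fails when both displaced conjugates are subdominant (${a=2}$): the upper bound is then about $(e^{\pi|\Delta|^{1/2}/2}+2079)^2\approx e^{\pi|\Delta|^{1/2}}$, which exceeds the available lower bound $3000\,e^{\pi|\Delta|^{1/2}}\min\{10^{-8},|\Delta|^{-3}\}$ for every large $|\Delta|$, so no contradiction arises. The paper needs an extra idea here: two subdominant conjugates have $\tau$-parameters differing by $\pm1/2$, hence form a point on $Y_0(4)$; transporting this to $(x,y)$ via the rationality of $\Phi_4$ and factoring $\Phi_4(X,j(\tau))$ shows ${\tau_y=(b+\sqrt\Delta)/8}$, which yields the much stronger lower bound $(e^{\pi|\Delta|^{1/2}}-2079)(e^{\pi|\Delta|^{1/2}/4}-2079)$ and closes the case. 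Your instinct that the small factors must be controlled is correct, but these two configurations require ideas beyond the template of~\cite{BLP16} (which relies on $xy$ being rational, so that every conjugate pair has the same product), and they are precisely the points your sketch leaves open.
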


Both the ``sum'' and the ``product'' statements of Theorem~\ref{thspinq} are very special cases of these two theorems. 

Note that in the case ${\Delta_x=\Delta_y}$, the statements ${[\mathbb{Q}(x,y):\mathbb{Q}(x+y)]\leq 2}$ and ${[\mathbb{Q}(x,y):\mathbb{Q}(xy)]\leq 2}$ are best possible: one cannot expect that ${x+y}$ or $xy$ always generates $\Q(x,y)$ in this case. Indeed, for instance,  when~$x$ is a non-real singular modulus and ${y=\bar x}$ is the complex conjugate of~$x$, then neither  ${x+y}$ nor $xy$ generates $\Q(x,y)$. 

Theorems~\ref{thsum} and~\ref{thprod} are proved, respectively, in Sections~\ref{ssum} and~\ref{sprod} after some preparations in Section~\ref{sprem} and~\ref{sfields}.

\vspace{1\baselineskip}

\noindent \textbf{Acknowledgements.} We thank Florian Luca for useful discussions and ideas. This paper was written during a visit of B. Faye at the Institut Math\'ematiques de Bordeaux in Fall 2017. She thanks Yuri Bilu for his invitation and mentoring. During the preparation of this paper, B. Faye was also supported by the EMS-CDC, the IRN GandA (CNRS) and the ALGANT Program. 

Our calculations were performed using the \textsf{PARI/GP} package \cite{pari}. The sources are available from the second author.



\section{Preliminaries}
\label{sprem}
Everywhere below the letter~$\Delta$ stands for an imaginary quadratic discriminant, that is, ${\Delta<0}$ and satisfies ${\Delta\equiv 0,1\bmod 4}$. The letter~$D$ will denote a fundamental discriminant; that is, in addition to the two conditions imposed on~$\Delta$, when ${D\equiv 0\bmod 4}$ we have ${D/4\equiv 2,3\bmod 4}$. 

We denote by $\OO_\Delta$ the imaginary quadratic order of discriminant~$\Delta$, that is, ${\OO_\Delta =\Z[(\Delta+\sqrt\Delta)/2]}$. Then ${\Delta=Df^2}$, where~$D$ is  discriminant of the number field ${K=\Q(\sqrt\Delta)}$ and ${f=[\OO_D:\OO_\Delta]}$ is the conductor. 

We denote by $\CC(\Delta)$ and by $h(\Delta)$ the class group and the class  number of~$\OO_\Delta$,  so that ${h(\Delta) =\#\CC(\Delta)}$. 

Given a singular modulus~$x$, we write ${\Delta_x=D_xf_x^2}$ with~$D_x$ the fundamental discriminant and~$f_x$ the conductor. We denote by~$\tau_x$ the only~$\tau$ is the standard fundamental domain such that ${j(\tau)=x}$. Further, we denote by~$K_x$ the associated imaginary quadratic field:
$$
K_x=\Q(\tau_x)=\Q(\sqrt{D_x})=\Q(\sqrt{\Delta_x}). 
$$

Recall the following basic properties.  
\begin{itemize}
\item
The singular moduli of discriminant~$\Delta$ form a full Galois orbit over~$\Q$ and over $\Q(\sqrt\Delta)$ as well. In particular, singular moduli~$x$ and~$y$ are conjugate over~$\Q$ if and only if ${\Delta_x=\Delta_y}$.


\item
There is a one-to-one correspondence between the singular moduli of discriminant~$\Delta$ and the set~$T_\Delta$ of triples $(a,b,c)$ of integers satisfying ${b^2-4ac=\Delta}$ and some other  conditions; see, for instance, \cite[Proposition~2.5]{BLP16}.  If  ${(a,b,c)\in T_\Delta}$ then ${(b+\sqrt{\Delta})/2}$ belongs to the standard fundamental domain, and the corresponding singular modulus is 
${j((b+\sqrt{\Delta})/2a)}$. 

\item
We say that a singular modulus is \textsl{dominant} if in the corresponding triple $(a,b,c)$ we have ${a=1}$, and \textsl{subdominant} if ${a=2}$. There exists exactly one dominant and at most two subdominant singular moduli of a given discriminant~$\Delta$, see \cite[Proposition 2.6]{BLP16}. 

\end{itemize}

We will systematically use the inequality 
$$
\bigl||j(z)|-e^{2\pi \Im z}\bigr|\le 2079,
$$
\cite[Lemma~1]{BMZ13}, 
which holds true for every~$z$ in the standard fundamental domain. In particular, if~$x$ is a singular modulus of discriminant~$\Delta$ corresponding to the triple ${(a,b,c)\in T_{\Delta}}$  then 
$$
\bigl||x|-e^{\pi|\Delta_x|^{1/2}/a}\bigr|\le 2079.
$$

\section{Fields generated by singular moduli}
\label{sfields}
Let~$G$ be a finite  group. We say that~$G$ is a group of \textsl{dihedral type} if there exists an abelian subgroup ${H<G}$ of index~$2$ and an element ${\iota\in G}$ of order~$2$ such that for any ${g\in H}$ we have ${\iota g\iota=g^{-1}}$. We call the couple $(H,\iota)$ the \textsl{dihedral structure} on~$G$. 

Note that a group of dihedral type may be abelian; in this case it is $2$-elementary (that is, isomorphic to ${\Z/2\Z\times\cdots\times\Z/2\Z}$).

The following simple lemma can be found in~\cite{BKR17}, where it is credited to Lenstra. Since the article~\cite{BKR17} did not appear yet, we include a short proof for the reader's convenience. 

\begin{lemma}
\label{llen}
Let~$G$ be a non-abelian group of dihedral type with dihedral structure $(H,\iota)$. Then~$H$ is generated by all elements of~$G$ of order $>2$. In particular,~$H$ is unique: if $(H',\iota')$ is another dihedral structures on~$G$, then ${H=H'}$. 
\end{lemma}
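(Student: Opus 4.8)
Let $G$ be non-abelian of dihedral type with structure $(H,\iota)$. I want to show $H$ is exactly the subgroup generated by the elements of order $>2$. One inclusion is easy: since $[G:H]=2$, the group $H$ is normal, so conjugation by any $g\in G$ preserves $H$; more to the point, $H$ is abelian, so I must check that every element of order $>2$ actually lies in $H$. Suppose $g\in G\setminus H$ has order $>2$. Every element of $G\setminus H$ can be written as $h\iota$ for some $h\in H$, and the relation $\iota h'\iota = (h')^{-1}$ gives $(h\iota)^2 = h\iota h\iota = h(h^{-1}) = 1$ (using $\iota h \iota = h^{-1}$, i.e. $\iota h = h^{-1}\iota$). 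Hence every element of $G\setminus H$ has order dividing $2$, so an element of order $>2$ cannot lie outside $H$. This shows every element of order $>2$ is in $H$, hence $\langle\, g\in G : \mathrm{ord}(g)>2\,\rangle \le H$.

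For the reverse inclusion I must show $H$ is generated by its elements of order $>2$ — equivalently, that $H$ is not generated by its involutions together with the identity, i.e. $H$ is not $2$-elementary. This is where non-abelianity of $G$ is used. The key step, which I expect to be the main (though still short) obstacle, is the following: if $H$ were $2$-elementary, then every $h\in H$ satisfies $h^{-1}=h$, so the dihedral relation $\iota h\iota = h^{-1} = h$ says $\iota$ commutes with every element of $H$; since $G = H\cup H\iota$ and $\iota$ also commutes with itself, $\iota$ is central, and $G = \langle H,\iota\rangle$ would then be abelian, contradicting the hypothesis. Therefore $H$ contains an element $h_0$ of order $>2$, but a single such element need not generate all of $H$, so I instead argue directly: let $H_0 = \langle\, h\in H : \mathrm{ord}(h)>2\,\rangle \le H$. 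If $H_0 \ne H$, pick the quotient $H/H_0$; I claim it is $2$-elementary, because any coset $hH_0$ of order $>2$ in $H/H_0$ would be... — cleaner: just observe that $H$ is generated by $H_0$ together with the involutions of $H$, and if $H_0\ne H$ then $H/H_0$ is generated by involutions, hence $2$-elementary. Lifting back, one shows $H = H_0 \times (\text{a $2$-elementary complement})$ is not needed; what I really want is simply that $H_0 = H$.

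Here is the clean finish I would actually write. Since every element of $G\setminus H$ is an involution (shown above), the elements of order $>2$ all lie in $H$, and $H_0 := \langle\, g\in G:\mathrm{ord}(g)>2\,\rangle \le H$. Suppose for contradiction $H_0 \subsetneq H$. Every $h\in H$ with $\mathrm{ord}(h)>2$ lies in $H_0$; hence every $h\in H\setminus H_0$ has order $\le 2$, and also every $h\in H_0$ is a product of such generators — but this does not immediately give $2$-elementarity of $H$. So instead use: $H/H_0$ is a group in which (the image of) every element has order $\le 2$ — because if $\bar h \in H/H_0$ had order $>2$ then $h^2\notin H_0$ while... hmm. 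Let me just use the structure theorem for finite abelian groups: write $H = \prod_i \Z/d_i\Z$; the elements of order $>2$ generate $\prod_{d_i>2}\Z/d_i\Z$ together with all $2$-torsion, so $H_0 = H$ unless every $d_i \le 2$, i.e. unless $H$ is $2$-elementary. But $H$ $2$-elementary forces, by the paragraph above, $G$ abelian — contradiction. Hence $H_0 = H$.

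For uniqueness: if $(H',\iota')$ is another dihedral structure on $G$, the argument applies verbatim to show $H'$ is generated by the elements of $G$ of order $>2$, and this set depends only on $G$, so $H' = H_0 = H$. \qed
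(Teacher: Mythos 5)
Your proof is correct and follows essentially the same route as the paper: every element of $G\setminus H$ is an involution, so all elements of order $>2$ lie in $H$, and non-abelianity of $G$ rules out $H$ being $2$-elementary. The only real difference is your final step, where you invoke the structure theorem for finite abelian groups; the paper reaches the same conclusion in one line by observing that for $k\in H$ of order $>2$ and $h\in H$ of order $2$ the product $kh$ also has order $>2$ (since $(kh)^2=k^2\neq 1$ in the abelian group $H$), so $h=k^{-1}\cdot(kh)$ is a product of two elements of order $>2$.
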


\begin{proof}
Note first of all~$G$ does contain elements of order $>2$ because it is not abelian. All of them must belong to~$H$, because every element of ${G\smallsetminus H}$ is of the form $\iota h$ with ${h\in H}$; hence it is of order~$2$. It remains to show that every element of~$H$ of order~$2$ is product of two elements of bigger order. Let ${k\in H}$ be of order $>2$ and let ${h\in H}$ be of order~$2$. Since~$H$ is abelian, $kh$ is also of order $>2$. Writing ${h=k^{-1}\cdot kh}$, we are done.   \qed
\end{proof}

\bigskip

Now let~$x$ be a singular modulus. We write ${\Delta=\Delta_x}$ and ${K=K_x}$. It is known that 
the field $K(x)$ is Galois over~$\Q$.  Set ${G=\Gal(K(x)/\Q)}$, ${H=\Gal(K(x)/K)}$ and let ${\iota\in G}$ be the complex conjugation.  It is known that~$H$ is isomorphic to $\CC(\Delta)$.

The following is well-known: see, for instance \cite[Lemma~9.3]{Co13} and   \cite[Corollary~3.3]{ABP15}. 

\begin{proposition}
\label{pcox}
The group~$G$ is of dihedral type, with dihedral structure $(H,\iota)$. Furthermore, the following properties are equivalent.

\begin{enumerate}
\item
\label{iab}
The group~$G$ is abelian. 

\item
\label{i2}
The group~$H$ is $2$-elementary. 

\item
\label{i22}
The group~$G$ is $2$-elementary. 

\item
\label{igal}
The field $\Q(x)$ is Galois over~$\Q$.

\item
\label{iab1}
The field $\Q(x)$ is abelian over~$\Q$.

\end{enumerate}

\end{proposition}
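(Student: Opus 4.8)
The plan is to prove the chain of equivalences by a short cycle of implications, using the dihedral structure $(H,\iota)$ on $G$ that is asserted in the first sentence of the proposition (which itself follows from the cited references). The backbone is the cycle $\eqref{iab}\Rightarrow\eqref{i2}\Rightarrow\eqref{i22}\Rightarrow\eqref{iab}$, together with $\eqref{i22}\Rightarrow\eqref{igal}\Rightarrow\eqref{iab1}\Rightarrow\eqref{iab}$. First I would record the one structural fact we need about a dihedral-type group: for $g\in H$ and any element $\iota h$ of $G\smallsetminus H$ (with $h\in H$), we have $(\iota h)g(\iota h)^{-1}=\iota(hgh^{-1})\iota=\iota g\iota=g^{-1}$, since $H$ is abelian; so every element outside $H$ inverts $H$ by conjugation, and every element of $G\smallsetminus H$ squares to $(\iota h)^2=\iota h\iota h=h^{-1}h=1$, hence has order dividing $2$.

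Now $\eqref{iab}\Rightarrow\eqref{i2}$: if $G$ is abelian then conjugation is trivial, so the relation $\iota g\iota=g^{-1}$ forces $g=g^{-1}$ for all $g\in H$, i.e.\ $H$ is $2$-elementary. For $\eqref{i2}\Rightarrow\eqref{i22}$: if $H$ is $2$-elementary then every element of $H$ has order dividing $2$, and by the observation above every element of $G\smallsetminus H$ also has order dividing $2$; hence every element of $G$ does, so $G$ is $2$-elementary (in particular abelian). The implication $\eqref{i22}\Rightarrow\eqref{iab}$ is trivial. This closes the first cycle, so $\eqref{iab}$, $\eqref{i2}$ and $\eqref{i22}$ are equivalent.

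For the field-theoretic statements, recall $\Q(x)\subseteq K(x)$ and $[K(x):\Q(x)]\le 2$, with $\Gal(K(x)/\Q)=G$ and $\Gal(K(x)/K)=H$; since $H$ is abelian of index $2$, any subgroup of $G$ containing no... more simply: $\Q(x)$ corresponds to a subgroup $N\le G$ of order dividing $2$. If $G$ is $2$-elementary, then $G$ is abelian, so every subgroup is normal; hence $K(x)/\Q$ is abelian and its subextension $\Q(x)/\Q$ is Galois — indeed abelian — giving $\eqref{i22}\Rightarrow\eqref{igal}$ and even $\eqref{i22}\Rightarrow\eqref{iab1}$. Trivially $\eqref{iab1}\Rightarrow\eqref{igal}$. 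It remains to show $\eqref{igal}\Rightarrow\eqref{iab}$ (equivalently $\eqref{iab1}\Rightarrow\eqref{iab}$, which is weaker and also follows). Suppose $\Q(x)/\Q$ is Galois; then $N=\Gal(K(x)/\Q(x))$ is normal in $G$. We must show $G$ is abelian. If $N$ is trivial then $K(x)=\Q(x)$ and $G=H$ is abelian, done. Otherwise $N$ has order $2$, say $N=\{1,\sigma\}$; since $K(x)=\Q(x)\cdot K$ is the compositum and $\Q(x)\cap K=\Q$ (as $\Q(x)$ is totally real when $x$ is real — one should here invoke that $K(x)/\Q(x)$ is the quadratic extension obtained by adjoining $\sqrt\Delta$, so $N$ is generated by complex conjugation, i.e.\ $\sigma=\iota$ up to the dihedral structure), normality of $N=\{1,\iota\}$ means $g\iota g^{-1}\in\{1,\iota\}$ for all $g$, hence $g\iota g^{-1}=\iota$, i.e.\ $\iota$ is central. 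But for $g\in H$ we have $\iota g\iota=g^{-1}$, so centrality of $\iota$ forces $g=g^{-1}$ for all $g\in H$; thus $H$ is $2$-elementary, and by the already-established equivalence $\eqref{i2}\Rightarrow\eqref{i22}\Rightarrow\eqref{iab}$, the group $G$ is abelian.

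The main obstacle — and the only place one must be a little careful rather than purely formal — is the step $\eqref{igal}\Rightarrow\eqref{iab}$: one needs to know that the order-$2$ subgroup $N=\Gal(K(x)/\Q(x))$, when nontrivial, is exactly the group generated by complex conjugation $\iota$, so that its normality translates into $\iota$ being central. This identification rests on the standard facts recalled in Section~\ref{sprem} and just before the proposition, namely $K(x)=\Q(x)K$ with $K=\Q(\sqrt\Delta)$ and $[K(x):\Q(x)]\le 2$; I would cite \cite[Lemma~9.3]{Co13} or \cite[Corollary~3.3]{ABP15} for this, exactly as the proposition's hypothesis already does. Everything else is elementary group theory about dihedral-type groups, driven entirely by the single relation $\iota g\iota=g^{-1}$.
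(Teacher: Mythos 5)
The paper does not actually prove Proposition~\ref{pcox}: it dismisses it as well known and cites \cite[Lemma~9.3]{Co13} and \cite[Corollary~3.3]{ABP15}. Your proposal supplies a genuine self-contained argument, and its substance is correct: the cycle $\eqref{iab}\Rightarrow\eqref{i2}\Rightarrow\eqref{i22}\Rightarrow\eqref{iab}$ is exactly the standard exercise on exponent-$2$ groups, and the reduction of \eqref{igal} to centrality of the generator of $N=\Gal(K(x)/\Q(x))$ is the right idea. Two small points deserve cleaning up. First, the case ``$N$ trivial'' never occurs: since ${[\Q(x):\Q]=h(\Delta)=[K(x):K]}$ and ${[K:\Q]=2}$, one has ${[K(x):\Q(x)]=2}$ exactly; moreover, as written that case is handled wrongly, because ${K(x)=\Q(x)}$ would not give ${G=H}$ (the subgroup $H$ always has index $2$ in $G$). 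Second, the nontrivial element of $N$ is the complex conjugation $\iota$ only when $x$ is real; in general it is some involution of ${G\smallsetminus H}$. What you actually need — and what your parenthetical about ${K(x)=\Q(x)\cdot K}$ already gives — is that this element restricts nontrivially to $K$, hence lies outside $H$, hence (by your own preliminary computation) inverts $H$ by conjugation; its centrality then forces ${g=g^{-1}}$ for all ${g\in H}$ just as in your argument with $\iota$ itself. With those two sentences repaired, the proof is complete and is a perfectly good substitute for the external references the paper relies on.
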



\begin{corollary}
\label{ctwoconj}
Let $x,x',y,y'$ be singular moduli. Assume that 
$$
\Delta_x=\Delta_{x'}, \quad \Delta_y=\Delta_{y'}, \quad D_x\ne D_y. 
$$
Assume further that ${\Q(x,x')=\Q(y,y')}$. Then ${\Q(x)=\Q(y)}$. 
\end{corollary}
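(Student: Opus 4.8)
The plan is to work with the Galois-theoretic setup from Proposition \ref{pcox}. Write $\Delta = \Delta_x = \Delta_{x'}$, $\Delta' = \Delta_y = \Delta_{y'}$, and let $L = \Q(x,x')$, which by hypothesis equals $\Q(y,y')$. Since the singular moduli of discriminant $\Delta$ form a full Galois orbit over $\Q$, both $x$ and $x'$ generate the \emph{same} field $\Q(x) = \Q(x')$ of degree $h(\Delta)$; similarly $\Q(y) = \Q(y')$ has degree $h(\Delta')$. Moreover $K_x(x) = \Q(\sqrt{\Delta})\cdot\Q(x)$ is the ring class field $R_\Delta$ of the order $\OO_\Delta$, and likewise $K_y(y) = R_{\Delta'}$. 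The key structural fact I would invoke is that $R_\Delta$ contains $K_x = \Q(\sqrt{\Delta})$ as its unique quadratic subfield that is ramified only at primes dividing $\Delta$ — more precisely, $R_\Delta \cap \Q^{\mathrm{ab}}$-type considerations pin down $K_x$ inside $R_\Delta$ — and similarly for $R_{\Delta'}$.

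The main idea is to show that $\Q(x)$ and $\Q(y)$ are both characterized \emph{intrinsically} inside $L$, independently of which discriminant produced them, so that they must coincide. First I would observe that $\Q(x)$ is the fixed field of complex conjugation $\iota$ acting on $R_\Delta = K_x(x)$; but I need to phrase this in terms of $L$ rather than $R_\Delta$. Here I would use the dihedral structure: let $G = \Gal(L/\Q)$ — note $L$ is Galois over $\Q$ since it is compositum-related to the ring class fields, though one should check $L$ itself is Galois, which follows because $L = \Q(x,x')$ is generated by a union of full Galois orbits. Apply Lemma \ref{llen}: if $\Gal(R_\Delta/\Q)$ is non-abelian of dihedral type, the subgroup $\Gal(R_\Delta/K_x)$ is \emph{canonically} the subgroup generated by elements of order $> 2$, hence $K_x$ is canonically determined. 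Then $\Q(x)$ is the fixed field of an order-$2$ element (complex conjugation) that inverts this canonical subgroup.

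The argument then splits according to whether the relevant Galois groups are abelian. If $\Gal(R_\Delta/\Q)$ is abelian, then by Proposition \ref{pcox} the field $\Q(x)$ is already Galois and abelian over $\Q$, and $R_\Delta = \Q(x)\cdot K_x$ with $[\Q(x):\Q] = h(\Delta)$; here $\Q(x)$ is the fixed field of $\iota$, and one argues directly. The substantive case is when $\Gal(R_\Delta/\Q)$ is non-abelian: then Lemma \ref{llen} gives that $K_x$ is \emph{uniquely} recoverable from the group structure of $\Gal(R_\Delta/\Q)$ as the fixed field of the canonical abelian subgroup $H$. Now inside $L = \Q(x,x') = \Q(y,y')$, I would argue that $\Q(x)$ and $\Q(y)$ have the same degree over $\Q$ — this needs the hypothesis $D_x \neq D_y$ to rule out accidental coincidences, and is where $L$ being simultaneously built from both orbits forces $h(\Delta) = h(\Delta')$ — and that, because $D_x \neq D_y$ forces $K_x \neq K_y$, yet $L$ is their common field, the complex conjugation on $L$ is a single element whose fixed field meets $R_\Delta$ in $\Q(x)$ and meets $R_{\Delta'}$ in $\Q(y)$. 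Combining: $\Q(x)$ is the fixed field of $\iota$ inside $L$ (the maximal totally real subfield, if $L$ is a CM field, or the analogous object), hence so is $\Q(y)$, giving $\Q(x) = \Q(y)$.

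The hard part will be making precise the claim that $\Q(x)$ is intrinsically characterized inside $L$ when the Galois groups are non-abelian and when $L$ is a proper compositum strictly larger than either $R_\Delta$ or $R_{\Delta'}$ — one must control how complex conjugation and the dihedral subgroups of the factors interact inside $\Gal(L/\Q)$, and use $D_x \neq D_y$ to guarantee the two quadratic subfields $K_x, K_y$ are distinct so that no collapsing occurs. I expect the cleanest route is: first reduce to the case $L = R_\Delta = R_{\Delta'}$ (showing the hypothesis $\Q(x,x') = \Q(y,y')$ already forces the ring class fields to coincide, using that $R_\Delta = K_x \cdot \Q(x)$ and $[\Q(x):\Q] = h(\Delta)$), then within that common field identify $\Q(x)$ and $\Q(y)$ as fixed fields of complex conjugation via Proposition \ref{pcox}(\ref{igal}) and Lemma \ref{llen}.
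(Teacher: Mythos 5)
Your proposal assembles the right ingredients (Proposition~\ref{pcox}, Lemma~\ref{llen}, the abelian/non-abelian dichotomy, and the use of ${D_x\ne D_y}$ to separate $K_x$ from $K_y$), and the abelian branch is essentially the paper's: if $\Q(x)$ is Galois over~$\Q$ everything is $2$-elementary, hence $\Q(y)$ is also Galois and ${\Q(x)=\Q(x,x')=\Q(y,y')=\Q(y)}$. But the non-abelian branch has a genuine gap, and it is the heart of the matter. You aim to prove ${\Q(x)=\Q(y)}$ there by characterizing $\Q(x)$ as ``the fixed field of complex conjugation'' inside $L$. That characterization is false: a singular modulus need not be real, so $\Q(x)$ need not be contained in (let alone equal to) the fixed field of~$\iota$ in $K_x(x)$; the fixed field of $\iota$ is one particular conjugate copy of $\Q(x)$, not $\Q(x)$ itself. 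You flag this step yourself as ``the hard part,'' but it cannot be repaired in that form. The correct move — and what the paper does — is to observe that the non-abelian case is \emph{vacuous}: taking $L$ to be the Galois closure of $\Q(x,x')$ one gets ${L=K_x(x)=K_y(y)}$, so $(H_x,\iota)$ and $(H_y,\iota)$ are two dihedral structures on the non-abelian group $\Gal(L/\Q)$; Lemma~\ref{llen} forces ${H_x=H_y}$, hence ${K_x=K_y}$, contradicting ${D_x\ne D_y}$. So Lemma~\ref{llen} is used to kill the case outright, not to pin down $\Q(x)$ inside $L$.

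Two further incorrect assertions in your setup should be removed. First, ``$x$ and $x'$ generate the same field ${\Q(x)=\Q(x')}$'' is false when $\Q(x)$ is not Galois over $\Q$: conjugate elements generate conjugate, not necessarily equal, fields (if this were true the corollary would be immediate from the hypothesis). Second, ${L=\Q(x,x')}$ need not be Galois over $\Q$; it is generated by two members of a Galois orbit, not the whole orbit, which is exactly why the paper passes to the Galois closure (and identifies it with $K_x(x)$ using ${\Q(x)\subset\Q(x,x')\subset K_x(x)}$ and ${[K_x(x):\Q(x)]\le 2}$).
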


\begin{proof}
If $\Q(x)$ is Galois over~$\Q$ then ${\Q(x,x')=\Q(x)}$ and $\Gal(\Q(x)/\Q)$ is $2$-elementary by Proposition~\ref{pcox}. Then $\Gal(\Q(y,y')/\Q)$ is  $2$-elementary as well, which implies that $\Q(y)$ is Galois over~$\Q$, which implies that ${\Q(y)=\Q(y,y')}$. Hence ${\Q(x)=\Q(y)}$.

Now assume that $\Q(x)$ is not Galois over~$\Q$. We will see that this leads to a contradiction.  Since $K_x(x)$ is Galois over~$\Q$ and ${[K_x(x):\Q(x)]\le 2}$, the field $K_x(x)$ is the Galois closure of $\Q(x)$ over~$\Q$.

Denote by~$L$ the Galois closure of $\Q(x,x')$ over~$\Q$. Since 
$$
\Q(x)\subset \Q(x,x')\subset K_x(x), 
$$
we have ${L=K_x(x)}$. Since ${\Q(x,x')=\Q(y,y')}$, we have similarly ${L=K_y(y)}$.

Set ${H_x=\Gal(L/K_x)}$ and ${H_y=\Gal(L/K_y)}$, and let~$\iota$ be the complex conjugation. Then $(H_x,\iota)$ and $(H_y,\iota)$ are dihedral structures in ${G=\Gal(L/\Q)}$. Proposition~\ref{pcox} implies that~$G$ is not abelian, and Lemma~\ref{llen} implies that ${H_x=H_y}$. Hence ${K_x=K_y}$, contradicting the assumption ${D_x\ne D_y}$. The proof is complete. \qed
\end{proof}

\section{Proof of Theorem~\ref{thsum}}
\label{ssum}
Let~$x$ and~$y$ be singular moduli. We want to show that   ${\Q(x,y)=\Q(x+y)}$ if ${\Delta_x\ne \Delta_y}$  and  ${[\Q(x,y):\Q(x+y)]\le 2}$ if ${\Delta_x=\Delta_y}$. We may clearly assume that ${x\ne y}$. In this case we will prove the following more general statement.

{\sloppy

\begin{theorem}
Let~$x$ and~$y$ be distinct singular moduli and ${\eps\in\{\pm1\}}$. Then ${\Q(x,y)=\Q(x+\eps y)}$, unless ${\eps=1}$ and ${\Delta_x=\Delta_y}$, in which case we have ${[\Q(x,y):\Q(x+y)]\le 2}$. 
\end{theorem}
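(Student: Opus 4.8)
\medskip
\noindent\textbf{Plan of proof.}
Since ${x\ne y}$ we have ${\eps y=(x+\eps y)-x}$, so ${\Q(x,y)=\Q(x+\eps y)(x)}$. Hence ${[\Q(x,y):\Q(x+\eps y)]}$ equals the number of Galois conjugates of~$x$ over ${\Q(x+\eps y)}$, that is, the cardinality of ${\{\sigma(x):\sigma(x+\eps y)=x+\eps y\}}$, where~$\sigma$ runs over ${\Gal(\overline{\Q}/\Q)}$. For such a~$\sigma$ the value ${\sigma(y)=\eps\bigl((x+\eps y)-\sigma(x)\bigr)}$ is determined by ${\sigma(x)}$, so this cardinality is also the number of pairs ${(x',y')}$ in the ${\Gal(\overline{\Q}/\Q)}$-orbit of ${(x,y)}$ with ${x'+\eps y'=x+\eps y}$. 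Thus the theorem is equivalent to the following statement: the only such pair is ${(x,y)}$ itself, except that when ${\eps=1}$ and ${\Delta_x=\Delta_y}$ the pair ${(y,x)}$ may also occur, but nothing else.

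It therefore suffices to prove the Diophantine assertion: if ${x,y,x',y'}$ are singular moduli with ${\Delta_{x'}=\Delta_x}$, ${\Delta_{y'}=\Delta_y}$ and ${x'+\eps y'=x+\eps y}$, then ${(x',y')=(x,y)}$, or ${\eps=1}$, ${\Delta_x=\Delta_y}$ and ${(x',y')=(y,x)}$. Suppose ${(x',y')}$ is a solution with ${x'\ne x}$ (hence ${y'\ne y}$), and set ${t=x-x'=\eps(y'-y)\ne 0}$. The basic tool is the inequality ${\bigl||z|-e^{\pi|\Delta_z|^{1/2}/a}\bigr|\le 2079}$ of Section~\ref{sprem} for a singular modulus~$z$ whose triple has leading coefficient~$a$, together with the fact that each discriminant has exactly one dominant (${a=1}$) and at most two subdominant (${a=2}$) moduli. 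Comparing absolute values in ${x-x'=\eps(y'-y)}$, the largest of ${|x|,|x'|,|y|,|y'|}$ must be matched up to a factor~$3$ by a term on the opposite side; chasing this through the dominant/subdominant dichotomy forces ${\Delta_x}$ and ${\Delta_y}$ to be comparable and the leading $q$-expansion terms ${e^{-\pi i b/a}e^{\pi|\Delta|^{1/2}/a}}$ on the two sides to nearly cancel. Refining the comparison — now taking into account the phases ${e^{-\pi i b/a}}$ and, where needed, the next terms of the $q$-expansions, all with explicit constants — one shows that a solution with ${x'\notin\{x,y\}}$ either is impossible or forces ${\max\{|\Delta_x|,|\Delta_y|\}}$ below an explicit bound; the finitely many surviving discriminants are then handled by a direct computation in \textsf{PARI/GP}. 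Granting this, the fibre above contains only ${(x,y)}$ unless ${\eps=1}$ and ${\Delta_x=\Delta_y}$, in which case it is contained in ${\{(x,y),(y,x)\}}$; hence ${\Q(x,y)=\Q(x+\eps y)}$ in the first case and ${[\Q(x,y):\Q(x+y)]\le 2}$ in the second.

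The main obstacle is the middle step. When ${|\Delta_x|}$ and ${|\Delta_y|}$ are both large but ${|\Delta_x|^{1/2}-|\Delta_y|^{1/2}}$ is small, the dominant terms on the two sides of ${x'+\eps y'=x+\eps y}$ have the same order of magnitude, so the crude size estimate no longer separates them; one must then exploit the finer arithmetic of the $q$-expansions of singular moduli to rule out accidental coincidences, and do so with completely explicit constants so that only a finite, machine-checkable list of discriminants remains. The field-theoretic results of Section~\ref{sfields}, in particular Corollary~\ref{ctwoconj}, further restrict which conjugate pairs ${(x',y')}$ actually lie in the orbit of ${(x,y)}$, and can be used to cut down the number of configurations that have to be examined.
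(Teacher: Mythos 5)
Your reduction of the theorem to counting the pairs $(x^\sigma,y^\sigma)$ with ${x^\sigma+\eps y^\sigma=x+\eps y}$ is correct and is just a restatement of the paper's setup with ${G=\Gal(L/\Q(x+\eps y))}$ and ${H=\Gal(L/\Q(x,y))}$; and your size argument via the dominant/subdominant dichotomy does dispose of the case ${\Delta_x=\Delta_y}$ and, with a little extra care (${|\Delta_y|^{1/2}\le|\Delta_x|^{1/2}-\sqrt3}$ when ${D_x=D_y}$ and ${f_x>f_y}$), of the case of equal fundamental discriminants. The genuine gap is the case ${D_x\ne D_y}$ with $|\Delta_x|$ and $|\Delta_y|$ both large and of comparable size --- precisely the case you yourself flag as ``the main obstacle''. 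There you propose to exploit ``the finer arithmetic of the $q$-expansions \dots with completely explicit constants so that only a finite, machine-checkable list of discriminants remains'', but no such argument is given and none is supplied by the standard estimates: when ${|\Delta_x|^{1/2}-|\Delta_y|^{1/2}}$ stays bounded the leading terms of ${x-x^\sigma}$ and ${\eps(y^\sigma-y)}$ have the same order of magnitude for infinitely many pairs of discriminants, and nothing in the lower-order terms visibly obstructs the identity ${x-x^\sigma=\eps(y^\sigma-y)}$. As it stands there is no finite list to hand to \textsf{PARI} at the end of your plan.

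The paper closes this case by a non-analytic mechanism, and your strengthening of the problem makes that mechanism unavailable: you replace ``pairs in the Galois orbit of $(x,y)$'' by arbitrary conjugates $x'$ of $x$ and $y'$ of $y$, whereas the paper uses crucially that ${x'=x^\sigma}$ and ${y'=y^\sigma}$ for the \emph{same} $\sigma$. From ${x-x^\sigma=\eps(y^\sigma-y)}$ and the already-proved equal-discriminant case with ${\eps=-1}$ (applied to the pairs $(x,x^\sigma)$ and $(y,y^\sigma)$) one gets ${\Q(x,x^\sigma)=\Q(x-x^\sigma)=\Q(y-y^\sigma)=\Q(y,y^\sigma)}$; then Corollary~\ref{ctwoconj} --- which rests on Lemma~\ref{llen} on the uniqueness of the abelian subgroup of index~$2$ in a non-abelian group of dihedral type --- forces ${\Q(x)=\Q(y)}$, and the classification in Table~2 of \cite{ABP15} of pairs of distinct fundamental discriminants with ${\Q(x)=\Q(y)}$ yields the finite list that is then checked by machine. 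You mention Corollary~\ref{ctwoconj} only as an optional device to ``cut down the number of configurations'', but it is the indispensable step; without it, or some substitute for it, your argument does not terminate.
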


}

Let~$L$ be the Galois closure of $\Q(x,y)$ over~$\Q$. Set 
$$
G=\Gal(L/\Q(x+\eps y)), \qquad H=\Gal(L/\Q(x,y)).
$$
Note that 
$$
H=\{\sigma \in G: x^\sigma=x\}=\{\sigma \in G: y^\sigma=y\}. 
$$
We want to show that ${G=H}$, unless
\begin{equation}
\label{except}
\Delta_x=\Delta_y,\qquad \eps=1, 
\end{equation}
in which case ${[G:H]\le 2 }$.  

\subsection{Equal discriminants}
\label{sseq}
We start from the case 
${\Delta_x=\Delta_y=\Delta}$. 
We may assume that~$x$ is dominant and~$y$ is not (recall that ${x\ne y}$. It follows that
$$
|x|\ge e^{\pi|\Delta|^{1/2}}-2079, \qquad |y|\le e^{\pi|\Delta|^{1/2}/2}+2079. 
$$
and 
\begin{equation}
\label{elowerequal}
|x+\eps y|\ge e^{\pi|\Delta|^{1/2}}-e^{\pi|\Delta|^{1/2}/2}-4158. 
\end{equation}

\subsubsection{The case ${\eps=1}$}
Assume  that ${\eps =1}$ and let us prove that ${[G:H]\le 2}$.  If ${[G:H]>2}$ then there exists ${\sigma \in G}$ such that ${x^\sigma \ne x}$ and ${x^\sigma \ne y}$. Since ${x+ y=x^\sigma+ y^\sigma }$ by the definition of the group~$G$, we also have ${y^\sigma \ne x}$. Thus, neither~$x^\sigma$ nor~$y^\sigma$ is dominant. It follows that 
\begin{equation*}
|x^\sigma|\le e^{\pi|\Delta|^{1/2}/2}+2079, \qquad |y^\sigma|\le e^{\pi|\Delta|^{1/2}/2}+2079, 
\end{equation*}
and 
\begin{equation}
\label{eupperequal}
|x+y|=|x^\sigma+y^\sigma|\le 2e^{\pi|\Delta|^{1/2}/2}+4158.
\end{equation}
This contradicts~\eqref{elowerequal} when~$|\Delta|\geq 9$, and for ${|\Delta|\leq 8}$ we have ${h(\Delta)=1}$, and so ${G=H}$ is a trivial group. 

\subsubsection{The case ${\eps=-1}$}
Now assume that ${\eps=-1}$ and let us prove that ${G=H}$. If  ${G\ne H}$ then there exist ${\sigma \in G}$ such that ${x^\sigma \ne x}$. Then ${y^\sigma \ne x}$ either; in the opposite case we would have ${2x=x^\sigma+y}$, which is impossible, since~$x$ is dominant, but~$x^\sigma$ and~$y$ are not. Thus, neither~$x^\sigma$ nor~$y^\sigma$ is dominant, and we again obtain~\eqref{eupperequal}, which together with~\eqref{elowerequal} implies  that ${|\Delta|\leq 8}$, in which case ${G=H}$ is a trivial group.  


\subsection{Equal fundamental discriminants}
Now let us assume that ${D_x=D_y=D}$, but ${f_x\ne f_y}$. We may assume that ${f_x>f_y}$ and that~$x$ is dominant.

Since  ${f_x> f_y}$ we have  ${f_x\ge f_y+1}$, and 
$$
|\Delta_y|^{1/2}=|D|^{1/2}f_y\le |D|^{1/2}f_x-|D|^{1/2}\le |\Delta_x|^{1/2}-\sqrt3. 
$$
Hence 
$$
|x|\ge e^{\pi|\Delta_x|^{1/2}}-2079, \qquad  |y|\le e^{\pi|\Delta_x|^{1/2}-\pi\sqrt3}+2079 \le 0.01 e^{\pi|\Delta_x|^{1/2}}+2079
$$
and 
\begin{equation}
\label{elowerdistinct}
|x+\eps y|\ge 0.99e^{\pi|\Delta_x|^{1/2}}-4158. 
\end{equation} 

Now let us assume that ${H\ne G}$. Then there exists ${\sigma\in G}$ such that  ${x^\sigma\ne x}$; in particular,~$x^\sigma$ is not dominant, and we have 
$$
|x^\sigma|\le e^{\pi|\Delta_x|^{1/2}/2}-2079, \qquad  |y^\sigma|\le e^{\pi|\Delta_x|^{1/2}-\pi\sqrt3}+2079 \le 0.01 e^{\pi|\Delta_x|^{1/2}}+2079. 
$$
Therefore 
$$
|x+\eps y|=|x^\sigma+y^\sigma|\le 0.01 e^{\pi|\Delta_x|^{1/2}}+ e^{\pi|\Delta_x|^{1/2}/2}+4158. 
$$
This contradicts~\eqref{elowerdistinct} when $|\Delta_x|\geq 9$, and for  ${|\Delta_x|\leq 8}$  the group ${G=H}$ is trivial.  

\subsection{Distinct fundamental discriminants}
\label{ssdist}
Now assume that ${D_x\ne D_y}$. 
If ${G\ne H}$ then there exists ${\sigma \in G}$ such that ${x^\sigma \ne x}$. For such~$\sigma$ we have 
${x-x^\sigma =\eps(y^\sigma-y)}$. 
In particular, 
${\Q(x-x^\sigma)=\Q(y-y^\sigma)}$. 

{\sloppy

As we have seen in Subsection~\ref{sseq},
${\Q(x-x^\sigma)=\Q(x,x^\sigma)}$. Similarly, ${\Q(y-y^\sigma)=\Q(y,y^\sigma)}$.  
We obtain  ${\Q(x,x^\sigma)=\Q(y,y^\sigma)}$. 

}

Now Corollary~\ref{ctwoconj} implies that ${\Q(x)=\Q(y)}$. Hence that our~$\Delta_x$ and~$\Delta_y$ are listed in Table~2 on page~12 of~\cite{ABP15}.  For these values of~$\Delta_x$ and~$\Delta_y$ the statement can be verified directly using \textsf{PARI}~\cite{pari}.  

\section{Proof of Theorem~\ref{thprod}}
\label{sprod}
It is rather similar to the proof of Theorem~\ref{thsum}, though technically a bit more complicated. Let~$x$ and~$y$ be non-zero singular moduli. We want to show that   ${\Q(x,y)=\Q(xy)}$ if ${\Delta_x\ne \Delta_y}$  and  ${[\Q(x,y):\Q(xy)]\le 2}$ if ${\Delta_x=\Delta_y}$. Since ${\Q(x^2)=\Q(x)}$ \cite[Lemma 2.6]{Ri17}, we may  assume that ${x\ne y}$. In this case we will prove the following more general statement. 

\begin{theorem}
Let~$x$ and~$y$ be distinct non-zero singular moduli and ${\eps\in\{\pm1\}}$. Then ${\Q(x,y)=\Q(xy^\eps)}$, unless ${\eps=1}$ and ${\Delta_x=\Delta_y}$, in which case we have ${[\Q(x,y):\Q(xy)]\le 2}$. 
\end{theorem}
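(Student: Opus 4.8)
The plan is to mirror the structure of the proof of Theorem~\ref{thsum}, replacing differences by ratios. Let~$L$ be the Galois closure of $\Q(x,y)$ over~$\Q$, and set ${G=\Gal(L/\Q(xy^\eps))}$ and ${H=\Gal(L/\Q(x,y))}$, so that ${H=\{\sigma\in G:x^\sigma=x\}=\{\sigma\in G:y^\sigma=y\}}$. For every ${\sigma\in G}$ we have the identity ${xy^\eps=x^\sigma(y^\sigma)^\eps}$. As before, the goal is to show ${G=H}$ except when ${\eps=1}$ and ${\Delta_x=\Delta_y}$, in which case ${[G:H]\le 2}$. I would split into the same three cases: equal discriminants, equal fundamental discriminants, and distinct fundamental discriminants.

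First I would treat ${\Delta_x=\Delta_y=\Delta}$, taking~$x$ dominant and~$y$ not, so that $|x|$ is at least roughly $e^{\pi|\Delta|^{1/2}}$ while $|y|$ is at most roughly $e^{\pi|\Delta|^{1/2}/2}$, giving a lower bound $|xy^\eps|\gtrsim e^{\pi|\Delta|^{1/2}/2}$ for ${\eps=1}$ and $|xy^{-1}|\gtrsim e^{\pi|\Delta|^{1/2}/2}$ as well (here one must also bound $|y|$ \emph{from below}, which requires that $y\ne 0$, hence the hypothesis, and a lower bound on a subdominant singular modulus — this is where I expect extra care compared to the sum case). If ${[G:H]>2}$ (for ${\eps=1}$) or ${G\ne H}$ (for ${\eps=-1}$), one finds ${\sigma\in G}$ with ${x^\sigma\ne x}$ and ${x^\sigma\ne y}$, hence $x^\sigma, y^\sigma$ both non-dominant, so $|x^\sigma(y^\sigma)^\eps|$ is at most roughly $e^{\pi|\Delta|^{1/2}}$ when ${\eps=1}$ and at most a constant (times a bounded ratio of subdominant moduli) when ${\eps=-1}$ — and either way one derives a contradiction for $|\Delta|$ large, with the finitely many small discriminants having ${h(\Delta)=1}$ handled directly. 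For ${\eps=-1}$ one rules out ${y^\sigma=x}$ exactly as before, since $x^2=x^\sigma y$ is impossible with~$x$ dominant and $x^\sigma,y$ not.

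Next, the case ${D_x=D_y=D}$ with ${f_x>f_y}$ and~$x$ dominant: then $|\Delta_y|^{1/2}\le|\Delta_x|^{1/2}-\sqrt3$, so $|y|\le 0.01\,e^{\pi|\Delta_x|^{1/2}}+2079$, giving $|xy^\eps|$ a lower bound of order $e^{\pi|\Delta_x|^{1/2}}$ (for ${\eps=1}$) resp.\ $99\,e^{-\pi|\Delta_x|^{1/2}}\cdot(\text{stuff})$ — more precisely for ${\eps=-1}$ one uses $|x/y|\ge$ something large — while if ${\sigma\in G}$ moves~$x$, both $x^\sigma$ and $y^\sigma$ are non-dominant and the product $x^\sigma(y^\sigma)^\eps$ is too small, a contradiction for $|\Delta_x|$ large. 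Finally, for ${D_x\ne D_y}$: if ${G\ne H}$ pick ${\sigma\in G}$ with ${x^\sigma\ne x}$; then $x/x^\sigma=(y^\sigma/y)^{-\eps}$, so $\Q(x/x^\sigma)=\Q(y/y^\sigma)$. By the equal-discriminant case already proved, $\Q(x/x^\sigma)=\Q(x,x^\sigma)$ and $\Q(y/y^\sigma)=\Q(y,y^\sigma)$, hence $\Q(x,x^\sigma)=\Q(y,y^\sigma)$, and Corollary~\ref{ctwoconj} forces $\Q(x)=\Q(y)$, pinning $\Delta_x,\Delta_y$ to the finite list in Table~2 of~\cite{ABP15}, to be checked by \textsf{PARI}.

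The main obstacle I anticipate is the lower bound on the subdominant singular modulus~$y$ (and on $y^\sigma$), needed throughout to control $|xy^{-1}|$ from above and $|xy|$ from below: for the sum, only an upper bound on $|y|$ was required, but for the product one genuinely needs $|y|$ and $|y^\sigma|$ bounded away from~$0$. One way around this is to observe that $|y|$ is the modulus of a singular modulus of discriminant~$\Delta$ with $a\ge 2$, so $|y|\ge e^{\pi|\Delta|^{1/2}/a}-2079$ is useless when $a$ is large, but one can instead pass to the norm: $|\mathrm{N}_{K(x)/\Q}(y)|$ is an integer (a power of a prime, or controlled by the factorization of $\Phi_\Delta(0)$-type quantities), so $|y|$ cannot be too small on average, and combined with the fact that only \emph{one} conjugate is dominant this yields the needed estimates; alternatively, one separates the sub-case $y$ (or $y^\sigma$) very close to~$0$ and handles it by the explicit list of small discriminants. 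Packaging this cleanly so that the three cases read in parallel with the proof of Theorem~\ref{thsum} is the bulk of the work; the rest is a routine transcription of the additive argument into multiplicative form.
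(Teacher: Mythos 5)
Your skeleton is right and your last case (distinct fundamental discriminants, via $\Q(x/x^\sigma)=\Q(y/y^\sigma)$, the equal-discriminant result, and Corollary~\ref{ctwoconj}) matches the paper. You also correctly flag the central analytic difficulty: unlike the sum, the product needs $|y|$ and its conjugates bounded away from $0$. The paper imports this from \cite{BLP16}, equation~(12), which gives ${|xy|\ge 3000e^{\pi|\Delta|^{1/2}}\min\{10^{-8},|\Delta|^{-3}\}}$ when~$x$ is dominant; your norm idea points in that direction but is not developed. The real problem, however, is that even granting such a bound, your inequalities do not close in two places, and in both the paper needs an extra idea you don't have.

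First, in the equal-discriminant case, knowing only that~$x^\sigma$ and~$y^\sigma$ are non-dominant gives the upper bound ${|x^\sigma y^\sigma|\le(e^{\pi|\Delta|^{1/2}/2}+2079)^2\approx e^{\pi|\Delta|^{1/2}}}$, which is \emph{larger} than the lower bound ${\approx|\Delta|^{-3}e^{\pi|\Delta|^{1/2}}}$ for all large $|\Delta|$: no contradiction. The paper splits off the sub-case where both~$x^\sigma$ and~$y^\sigma$ are subdominant (${a=2}$); there ${\tau_{x^\sigma}-\tau_{y^\sigma}=\pm1/2}$, so $(x^\sigma,y^\sigma)$ lies on $Y_0(4)$, hence so does the conjugate point $(x,y)$, and the explicit factorization of $\Phi_4(X,j(\tau))$ forces ${\tau_y=(b+\sqrt\Delta)/8}$, whence ${|y|\gtrsim e^{\pi|\Delta|^{1/2}/4}}$ and the bounds separate. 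You have no substitute for this modular-curve step, and the same issue recurs in your ${\eps=-1}$ sub-case. Second, in the equal-fundamental-discriminant case your claim that ``the product $x^\sigma(y^\sigma)^\eps$ is too small'' fails: only~$x^\sigma$ is known to be non-dominant, while $|y^\sigma|$ can be as large as ${e^{\pi|\Delta_y|^{1/2}}\approx e^{\pi|\Delta_x|^{1/2}-\pi\sqrt3}}$, so the upper bound is ${\approx e^{1.5\pi|\Delta_x|^{1/2}}}$, far above the lower bound ${\approx e^{\pi|\Delta_x|^{1/2}}}$. The paper first proves ${K(x)=K(y)}$ and invokes Proposition~4.3 of \cite{ABP15} to force ${f_x=2f_y}$ and ${\Delta_y\equiv1\bmod 8}$, which kills subdominant moduli of discriminant $\Delta_x=4\Delta_y$ and shrinks the exponents to ${\tfrac23\pi|\Delta_x|^{1/2}+\tfrac12\pi|\Delta_x|^{1/2}<\pi|\Delta_x|^{1/2}}$. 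Without these two repairs the proposal does not prove the theorem.
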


We again denote by~$L$  the Galois closure of $\Q(x,y)$ over~$\Q$, and we set 
$$
G=\Gal(L/\Q(xy^\eps)), \qquad H=\Gal(L/\Q(x,y)).
$$
We want to show that ${G=H}$, unless~\eqref{except} holds, 
in which case ${[G:H]\le 2 }$.  

\subsection{Equal discriminants}
\label{sseqprod}
We again start from the case 
${\Delta_x=\Delta_y=\Delta}$. 
We may assume that~$x$ is dominant and~$y$ is not.

\subsubsection{The case ${\eps=1}$}
\label{ssseone}
Assume first that ${\eps=1}$ and let us prove that ${[G:H]\le 2}$.  We have the lower bound 
\begin{equation}
\label{elowereq}
|xy|\ge 3000e^{\pi|\Delta|^{1/2}}\min\{10^{-8}, |\Delta|^{-3}\}, 
\end{equation}
see~\cite{BLP16}, equation~(12). If ${[G:H]>2}$ then there exists ${\sigma \in G}$ such that ${x^\sigma \ne x}$ and ${x^\sigma \ne y}$. Since ${x y=x^\sigma y^\sigma }$ by the definition of~$G$, we also have ${y^\sigma \ne x}$. Thus, neither~$x^\sigma$ nor~$y^\sigma$ is dominant. 

Now we have two cases. If one of~$x^\sigma$,~$y^\sigma$ is not subdominant then we have the upper bound 
\begin{equation}
\label{euppereq}
|xy|=|x^\sigma y^\sigma|\le (e^{\pi|\Delta|^{1/2}/2}+2079)(e^{\pi|\Delta|^{1/2}/3}+2079), 
\end{equation}
which contradicts~\eqref{elowereq} when~$|\Delta|\geq 396$. 

Now assume that both~$x^\sigma$ and~$y^\sigma$ are subdominant. Then from  \cite[Proposition~2.6]{BLP16}, we have that  ${\Delta=1\bmod8}$ and 
$$
\{\tau_{x^\sigma}, \tau_{y^\sigma}\}=\left\{\frac{-1+\sqrt\Delta}{4},\frac{1+\sqrt\Delta}{4}\right\}.  
$$
In particular, ${\tau_{x^\sigma}-\tau_{y^\sigma}=\pm1/2}$, which implies that $(x^\sigma,y^\sigma)$ is a point on the modular curve $Y_0(4)$, defined by the equation ${\Phi_4(X,Y)=0}$, where $\Phi_4$ is the classical modular polynomial of level~$4$. Since $\Phi_4$ has coefficients in~$\Q$, the point $(x,y)$ must also belong to this curve. But we have 
\begin{align*}
\Phi_4(X,j(\tau))=&\left(X-j\left(\frac{\tau}{4}\right)\right)\left(X-j\left(\frac{\tau+1}{4}\right)\right)\left(X-j\left(\frac{\tau+2}{4}\right)\right)\times\\ 
&\left(X-j\left(\frac{\tau+3}{4}\right)\right)(X-j(4\tau))\left(X-j\left(\tau+\frac{1}{2}\right)\right).
\end{align*} 
Since ${\tau_x=(1+\sqrt\Delta)/2}$, we must have 
${\tau_y =(b+\sqrt\Delta)/8}$ with some~$b$. This shows that in this case we have a sharper, than~\eqref{elowereq} lower bound 
\begin{equation}
\label{elowermodul}
|xy|\ge (e^{\pi|\Delta|^{1/2}}-2079)(e^{\pi|\Delta|^{1/2}/4}-2079).
\end{equation}
We also have the upper bound 
\begin{equation}
\label{euppermodul}
|xy|=|x^\sigma y^\sigma|\le (e^{\pi|\Delta|^{1/2}/2}+2079)^2.  
\end{equation}
Comparing the two bounds, we again obtain a contradiction when~$|\Delta|\geq 95$.

Thus in any case ${|\Delta|\le 395}$, and  the condition ${[G:H]\le 2}$ can be verified by a direct calculation using \textsf{PARI}. 

\subsubsection{The case ${\eps=-1}$}
\label{sssepsminone}
Now assume that ${\eps=-1}$ and let us prove that ${G=H}$. If  ${G\ne H}$ then there exist ${\sigma \in G}$ such that ${x^\sigma \ne x}$. 
We obtain the equality 
${xy^\sigma=x^\sigma y}$, 
with both~$x^\sigma$ and~$y$ not dominant.  If one of them is not subdominant either, then we have bounds of the form~\eqref{elowereq} and~\eqref{euppereq} for~$|xy^\sigma|$, leading to a contradiction when $|\Delta|\geq 396$; for the small $|\Delta|$  it can be verified directly that ${G=H}$.  

If both~$x^\sigma$ and~$y$ are  subdominant, then $(x^\sigma,y)$ is a point on $Y_0(4)$. Then so is $(x,y^{\sigma^{-1}})$.  We have
${xy^{\sigma^{-1}}=x^{\sigma^{-1}} y}$ with both~$x^{\sigma^{-1}}$ and~$y$ not dominant. Arguing  as in Subsection~\ref{ssseone}, we bound ${|xy^{\sigma^{-1}}|}$ from below with the right-hand side of~\eqref{elowermodul}, and ${|x^{\sigma^{-1}} y|}$ from above by the right-hand side of~\eqref{euppermodul}, again arriving to a contradiction when $|\Delta|\geq 95$. Thus, in any case we have ${|\Delta|\le 395}$, and for these small values of~$|\Delta|$ it can be verified that ${G=H}$ using \textsf{PARI}.

\subsection{Equal fundamental discriminants}
Now let us assume that ${D_x=D_y=D}$, but ${f_x\ne f_y}$. We want to show that ${G=H}$. 
We may assume that~$x$ is dominant and that ${f_x>f_y}$. 

Assume that ${G\ne H}$. Then there exists ${\sigma\in G}$ such that ${x^\sigma \ne x}$. Then also ${y^\sigma \ne y}$. We have ${x/x^\sigma=(y/y^\sigma)^{\eps}}$, and, in particular, ${\Q(x/x^\sigma)=\Q(y/y^\sigma)}$. The result of Subsection~\ref{sssepsminone} implies that ${\Q(x,x^\sigma)=\Q(y,y^\sigma)}$. It follows that ${K(x,x^\sigma)=K(y,y^\sigma)}$, where we set ${K=K_x=K_y=\Q(\sqrt D)}$. Since the fields $K(x)$ and $K(y)$ are Galois over~$\Q$, we obtain ${K(x)=K(y)}$. Proposition 4.3  from~\cite{ABP15} now implies\footnote{To be precise, the hypothesis of Proposition 4.3  from~\cite{ABP15} is (in our notation) ${\Q(x)=\Q(y)}$, which is formally stronger than ${K(x)=K(y)}$. But in the proof of this proposition it is used only that ${K(x)=K(y)}$.} that either ${h(\Delta_x)=h(\Delta_y)=1}$, which is impossible because ${G\ne H}$, or ${f_x/f_y\in \{2,1,1/2\}}$. Since ${f_x>f_y}$, we have ${f_x=2f_y}$.

In the sequel we denote ${\Delta_y=\Delta}$ and ${\Delta_x=4\Delta}$. Note that ${\Delta\equiv 1\bmod 8}$, see \cite[Subsection~3.2.2]{BLP16}. This implies that there are no subdominant singular moduli of discriminant $4\Delta$, see \cite[Proposition~2.6]{BLP16}. In particular,~$x^\sigma$ is not subdominant. 

We have 
\begin{equation}
\label{etwo}
xy=x^\sigma y^\sigma\quad  \text{if $\eps=1$}, \qquad  xy^\sigma=x^\sigma y \quad \text{if $\eps=-1$}.
\end{equation}
Since~$x$ is dominant,  both left-hand sides in~\eqref{etwo} can be bounded from below as 
$$
3000e^{\pi|\Delta_x|^{1/2}}\min\{10^{-8}, |\Delta_y|^{-3}\} \ge  3000e^{2\pi|\Delta|^{1/2}}\min\{10^{-8}, |\Delta|^{-3}\}
$$
see~\cite{BLP16}, equation~(12).
Further, since~$x^\sigma$ is neither dominant nor subdominant, both right-hand sides in~\eqref{etwo} can be estimated from above as 
$$
(e^{\pi|\Delta_x|^{1/2}/3}+2079)(e^{\pi|\Delta_y|^{1/2}}+2079) \le (e^{2\pi|\Delta|^{1/2}/3}+2079)(e^{\pi|\Delta|^{1/2}}+2079)
$$
Comparing the lower and the upper estimates, we obtain a contradiction for $|\Delta|\geq 99$. For the remaining small values of $\Delta$ the condition ${G=H}$ can be verified directly using \textsf{PARI}.

\subsection{Distinct fundamental discriminants}
Now assume that ${D_x\ne D_y}$. We argue exactly as in Subsection~\ref{ssdist}. 
If ${G\ne H}$ then there exists ${\sigma \in G}$ such that ${x^\sigma \ne x}$. For such~$\sigma$ we have 
${x/x^\sigma =(y^\sigma/y)^{\eps}}$. 
In particular, 
${\Q(x/x^\sigma)=\Q(y/y^\sigma)}$.
The result of Subsection~\ref{sseqprod} implies that   ${\Q(x,x^\sigma)=\Q(y,y^\sigma)}$, and we obtain   ${\Q(x)=\Q(y)}$ by Corollary~\ref{ctwoconj}. Hence  our~$\Delta_x$ and~$\Delta_y$ are listed in Table~2 on page~12 of~\cite{ABP15}.  For these values of~$\Delta_x$ and~$\Delta_y$ the statement can be verified directly using \textsf{PARI}.

{\footnotesize

\bibliographystyle{amsplain}
\bibliography{sketch4}

\providecommand{\bysame}{\leavevmode\hbox to3em{\hrulefill}\thinspace}
\providecommand{\MR}{\relax\ifhmode\unskip\space\fi MR }
\providecommand{\MRhref}[2]{%
  \href{http://www.ams.org/mathscinet-getitem?mr=#1}{#2}
}
\providecommand{\href}[2]{#2}
\begin{thebibliography}{1}

\bibitem{ABP15}
Bill Allombert, Yuri Bilu, and Amalia Pizarro-Madariaga, \emph{C{M}-points on
  straight lines}, Analytic number theory, Springer, Cham, 2015, pp.~1--18.
  \MR{3467387}

\bibitem{An98}
Yves Andr\'e, \emph{Finitude des couples d'invariants modulaires singuliers sur
  une courbe alg\'ebrique plane non modulaire}, J. Reine Angew. Math.
  \textbf{505} (1998), 203--208. \MR{1662256}

\bibitem{BKR17}
Yuri Bilu, Lars K{\"u}hne, and Antonin Riffaut, \emph{Rational values of sums
  and of products of three singular moduli}.

\bibitem{BLP16}
Yuri Bilu, Florian Luca, and Amalia Pizarro-Madariaga, \emph{Rational products
  of singular moduli}, J. Number Theory \textbf{158} (2016), 397--410.
  \MR{3393559}

\bibitem{BMZ13}
Yuri Bilu, David Masser, and Umberto Zannier, \emph{An effective ``theorem of
  {A}ndr\'e'' for {$CM$}-points on a plane curve}, Math. Proc. Cambridge
  Philos. Soc. \textbf{154} (2013), no.~1, 145--152. \MR{3002589}

\bibitem{Co13}
David~A. Cox, \emph{Primes of the form {$x^2 + ny^2$}}, second ed., Pure and
  Applied Mathematics (Hoboken), John Wiley \& Sons, Inc., Hoboken, NJ, 2013,
  Fermat, class field theory, and complex multiplication. \MR{3236783}

\bibitem{Ku13}
Lars K\"uhne, \emph{An effective result of {A}ndr\'e-{O}ort type {II}}, Acta
  Arith. \textbf{161} (2013), no.~1, 1--19. \MR{3125148}

\bibitem{Ri17}
Antonin Riffaut, \emph{Equations with powers of singular moduli},  (2017).

\bibitem{pari}
{The PARI Group}, \emph{{PARI/GP} ({V}ersion 2.9.2)}, 2017, Bordeaux,
  \url{http://pari.math.u-bordeaux.fr/}.

\end{thebibliography}

}

\end{document}